\theoremstyle{plain}
\newtheorem{theorem}{Theorem}
\newtheorem{lemma}[theorem]{Lemma}
\newtheorem*{theorem*}{Theorem}
\newcommand{\N}{\mathbb{N}}
\begin{document}
\author{Iwan Praton}
\author{Nart Shalqini}
\address{Franklin \& Marshall College} \email{ipraton@fandm.edu,
nshalqin@fandm.edu}
\title{Amicable Heron Triangles}
\date{}
\maketitle

\begin{abstract}
A Heron triangle is a triangle whose side lengths
and area are integers. Two Heron triangles are
\emph{amicable} if the perimeter of one is the area of
the other. We show, using elementary techniques,
that there is only one pair of amicable Heron triangles.
\end{abstract}

\section*{Introduction}
A Heron triangle is a triangle whose side lengths
and area are integers. They are named for the
Greek mathematician Heron (or Hero) of Alexandria,
who is usually credited with inventing the
formula for the area $A$ of a triangle in terms of its
side lengths $a, b, c$: 
\[
A=\sqrt{s(s-a)(s-b)(s-c)};
\]
here $s$ is the semiperimeter $\frac12(a+b+c)$. 

Heron triangles form a popular topic (e.g., \cite{Ca},
\cite{Ch}, \cite{N}, \cite{Y}),
and new facts about them are still being discovered.
For example, Hirakawa and Matsumura \cite{KM} showed
recently that there is a unique pair (up to scaling)
of right and isosceles Heron triangles with
the same perimeter and the same area. Surprisingly,
the proof
uses sophisticated tools from the theory of
hyperelliptic curves. The result has been featured
in a Numberphile video \cite{H}.

The video focuses mostly on \emph{equable}
Heron triangles (called Super-Hero triangles
in the video), i.e., Heron triangles where the
perimeter is equal to the area. This seems
analogous to \emph{perfect numbers}. Recall
that a perfect number is a
positive integer whose aliquot sum
is equal to itself. (The aliquot sum of $n$
is the sum of the divisors of $n$, excluding
$n$.) In
equable Heron triangles, the perimeter and
area play the roles of $n$ and its aliquot sum.

Venturing
beyond a single number, recall that a pair
of positive integers $n$ and $m$ form an
\emph{amicable pair} if the aliquot sum of $n$
is equal to $m$ and the aliquot sum of $m$
is equal to $n$. 
Analogously,
we define two Heron triangles $H_1$ and
$H_2$ to be \emph{amicable} if the area
of $H_1$ is equal to the perimeter of $H_2$
and the perimeter of $H_1$ is equal to
the area of $H_2$. 

Amicable Heron triangles exist:
the triangles with side lengths $(3, 25, 26)$
and  $(9, 12, 15)$ form an example. They
are an unusual looking pair. 
(See Figure~\ref{fig:AB}.)

\begin{figure}[htbp]
  \centering
  \subfloat{\includegraphics[width=1in]{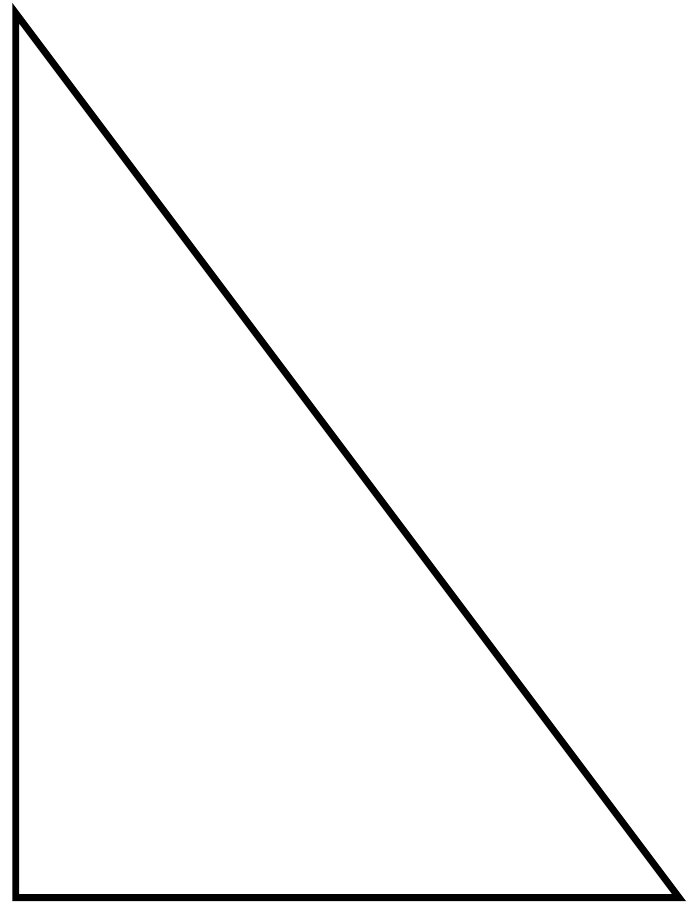} \label{fig:b}}\\
  \subfloat{\includegraphics[width=4in]{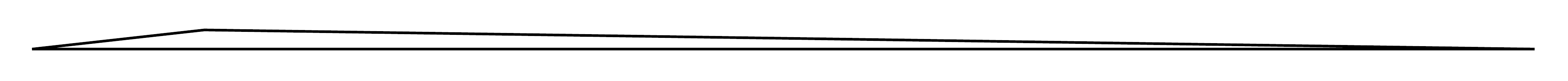} \label{fig:a}} 
  \caption{A unique pair of triangles} \label{fig:AB}
\end{figure}

Somewhat surprisingly, there is no other example.

\begin{theorem*}
There is only one pair of amicable Heron
triangles: the $(3,25,26)$ and $(9,12,15)$
triangles.
\end{theorem*}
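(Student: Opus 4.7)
The plan is to translate the amicable condition into a rigid relation between the two inradii and then reduce to a finite case analysis of Pell-type Diophantine equations.

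Write $s_i = P_i/2$ and $r_i = A_i/s_i$ for the semiperimeter and inradius of $H_i$. The amicable condition $A_i = P_{3-i} = 2 s_{3-i}$ is equivalent to $r_i s_i = 2 s_{3-i}$; multiplying these for $i = 1, 2$ yields the clean identity
\[
r_1 r_2 = 4.
\]
If $r_1 = r_2 = 2$, both triangles are equable, forcing $P_1 = A_2 = P_2$; but the five equable Heron triangles $(5,12,13)$, $(6,8,10)$, $(6,25,29)$, $(7,15,20)$, $(9,10,17)$ have pairwise distinct perimeters, so $H_1 = H_2$, and a genuine amicable pair requires two distinct triangles. Hence one may assume $r_1 < 2 < r_2$ and $s_1 > s_2$.

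Set $x_i = s_i - a_i$, $y_i = s_i - b_i$, $z_i = s_i - c_i$ with $a_i \ge b_i \ge c_i$, so $x_i \le y_i \le z_i$; since every Heron triangle has even perimeter, these are positive integers with $x_i + y_i + z_i = s_i$, and Heron's formula becomes $s_i x_i y_i z_i = A_i^2 = 4 s_{3-i}^2$. Because $z_1 \ge s_1/3$ and $s_2 < s_1$,
\[
x_1 y_1 = \frac{4 s_2^2}{s_1 z_1} \le \frac{12 s_2^2}{s_1^2} < 12.
\]
A direct check rules out $x_1 = y_1 = 1$ (the resulting equation $(s_1 - 1)^2 - (2s_2)^2 = 1$ has no nondegenerate solution), so $(x_1, y_1)$ lies in an explicit finite list of pairs with $2 \le x_1 y_1 \le 11$.

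For each admissible $(x_1, y_1)$, the relation $x_1 y_1 \cdot s_1 (s_1 - x_1 - y_1) = 4 s_2^2$ is a Pell-type equation in $(s_1, s_2)$; its integer solutions are either sporadic or form a sequence generated by a recurrence. For each such $(s_1, s_2)$ one tests whether a companion Heron triangle $H_2$ of semiperimeter $s_2$ and area $2 s_1$ exists, i.e., whether $s_2 \mid 4 s_1^2$ and $4 s_1^2/s_2$ admits a factorization into three positive integers summing to $s_2$. The unique surviving case is $(x_1, y_1) = (1, 2)$ with $(s_1, s_2) = (27, 18)$, giving $H_1 = (3, 25, 26)$ and, via $(x_2, y_2, z_2) = (3, 6, 9)$, the companion $H_2 = (9, 12, 15)$. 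The main obstacle is ruling out the remaining infinite Pell tail for $(x_1, y_1) = (1, 2)$, namely $(s_1, s_2) \in \{(150, 105), (867, 612), \ldots\}$, together with the sporadic candidates from the other families: one must show that beyond the known pair the partition-of-$s_2$ condition on $4 s_1^2/s_2$ always fails, likely via a congruence analysis of the Pell recurrence.
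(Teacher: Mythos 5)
Your setup is attractive and partly sharper than the paper's: the identity $r_1 r_2 = 4$, the reduction to $r_1 < 2 < r_2$ via the five equable triangles, and the bound $x_1 y_1 < 12$ (obtained from $z_1 \ge s_1/3$ and $s_2 < s_1$) are all correct and give a finite list of $(x_1,y_1)$. But the proof is not complete, and you say so yourself: for each $(x_1,y_1)$ the equation $x_1 y_1\, s_1(s_1 - x_1 - y_1) = 4s_2^2$ is a Pell-type equation with infinitely many solutions, and you leave the elimination of the infinite tails (e.g.\ $(150,105), (867,612), \ldots$ for $(x_1,y_1)=(1,2)$, and whatever the other ten or so families produce) to an unspecified ``congruence analysis of the Pell recurrence.'' That is the entire remaining difficulty, not a routine verification: nothing in your argument shows that only finitely many terms of each recurrence satisfy the companion condition, let alone identifies them all. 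As written, the finiteness of the case analysis --- the heart of the theorem --- is asserted, not proved.

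The missing ingredient is a second use of the integrality of $xyz = 2A^2/p$, this time applied to the \emph{partner} triangle $H_2$: since $A_2 = 2s_1$ and $p_2 = 2s_2$, integrality of $x_2y_2z_2$ gives $2s_2 \mid 4s_1^2$, i.e.\ $A_1 \mid 2p_1^2$. Squaring, $x_1y_1z_1 \mid 64 s_1^3 = 64(x_1+y_1+z_1)^3$, and expanding $(z_1 + (x_1+y_1))^3$ yields $z_1 \mid 64(x_1+y_1)^3$. This bounds $z_1$ (hence $s_1$) by an explicit divisor list for each $(x_1,y_1)$ and collapses every one of your Pell families to a finite, directly checkable set --- it is exactly what kills $(150,105)$, since $147 \nmid 64\cdot 27$, while retaining $(27,18)$ and $(867,612)$, the latter of which you must still eliminate by hand (the paper does this with a parity argument on $x_2y_2z_2 = 4913$). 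If you graft this divisibility step onto your framework, your proof closes; without it, the Pell tails remain open.
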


In contrast to \cite{KM}, we use completely elementary
methods to prove this result. 

\section*{Proofs}
We begin by establishing our notation.
Suppose we have a Heron triangle with side lengths 
$a,b,c$. Its semiperimeter $s=(a+b+c)/2$ is an
integer. We define $x=s-a$, $y=s-b$,
$z=s-c$, and set $x\leq y\leq z$.
Then Heron's formula for the area
of the triangle becomes $\sqrt{sxyz}$. 

\begin{lemma}
Suppose $H$ is one of a pair of amicable Heron
triangles with perimeter $p$ and area $A$.
Then $A$ divides $2p^2$.
\end{lemma}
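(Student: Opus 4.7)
The plan is to use the substitution $x=s-a$, $y=s-b$, $z=s-c$ introduced just above the lemma, together with the fact that $x+y+z=s$, so that Heron's formula becomes the clean identity $A^2=sxyz$. In this notation the perimeter is $p=2s$, so the target relation $A\mid 2p^2$ becomes $A\mid 8s^2$.

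Next I would introduce notation for the partner triangle. Let $H_1$ be the triangle in the statement, with semiperimeter $s_1$, area $A_1$, and let $H_2$ be its amicable partner, with the analogous quantities $s_2$, $A_2$ and Heron decomposition $x_2,y_2,z_2$. The amicability assumption translates directly to
\begin{equation*}
A_1=p_2=2s_2, \qquad A_2=p_1=2s_1.
\end{equation*}
The point is that the area of each triangle is now forced to be an \emph{even} integer expressed in terms of the partner's semiperimeter.

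The main step is then just a one-line computation. Using $A_2^2=s_2x_2y_2z_2$ for the partner triangle,
\begin{equation*}
2p_1^2 = 8s_1^2 = 2A_2^2 = 2s_2x_2y_2z_2 = A_1\cdot x_2y_2z_2,
\end{equation*}
so $A_1$ divides $2p_1^2$, as desired.

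There is no real obstacle: the only thing one must notice is that Heron's formula applied to the \emph{other} triangle expresses $2p_1^2$ as a multiple of $A_1$ for free, because $p_1=A_2$ and $A_1=2s_2$ line up perfectly with the factor $s_2$ in $A_2^2=s_2x_2y_2z_2$. The nontrivial content of the lemma is thus entirely packaged into the amicability identities; the integrality of $x_2,y_2,z_2$ (which follows from the integrality of $s_2$ and the side lengths) completes the divisibility.
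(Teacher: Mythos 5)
Your proof is correct and is essentially the paper's argument: both apply Heron's formula to the partner triangle to see that $2A_2^2/p_2=x_2y_2z_2$ is an integer, then translate via the amicability identities $A_2=p_1$, $p_2=A_1$. The paper phrases this as the general fact that $2A^2/p=xyz\in\N$ for any Heron triangle before specializing, but the content is identical.
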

\begin{proof}
For any Heron triangle, 
\[
\frac{2A^2}{p} = \frac{2sxyz}{2s}=xyz
\]
is an integer. Apply this result to the partner
triangle of $H$. Since the perimeter of $H$
is equal to the area of its partner and the area
of $H$ is equal to the perimeter of its partner,
we get the result of the lemma.
\end{proof}

We now take care of the case where
both amicable triangles are equable. It is well-known
that there are only five equable Heron triangles \cite{D}:
triangles with side lengths 
$(5,12,13), (6,8,10), (6,25,29), (7,15,20)$, and $(9,10,17)$.
Their perimeters (and thus their areas) are all different, so
none of them form an amicable pair. Equable
triangles are not amicable.

We conclude that if we have an amicable pair, 
then for one of the triangles the perimeter 
is larger than the area. These triangles are long
and skinny, similar to  the second triangle in Figure~\ref{fig:AB}.
There are not many such triangles.
From now on, let $H$ denote a triangle
of this kind. 

\begin{lemma}
For $H$ as above, 
\[
4(x+y+z) > xyz. \eqno(*)
\]
\end{lemma}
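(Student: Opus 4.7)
The plan is to translate the hypothesis $p > A$ directly into the claimed inequality by expressing both $p$ and $A$ in terms of $x,y,z$. The key observation is that $x+y+z = 3s - (a+b+c) = 3s - 2s = s$, so the semiperimeter is simply $s = x+y+z$. Consequently the perimeter is $p = 2(x+y+z)$, and Heron's formula gives $A^2 = sxyz = (x+y+z)\,xyz$.

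Since $H$ has perimeter strictly exceeding area, both quantities being positive, we may square the inequality $p > A$ to obtain $p^2 > A^2$, that is,
\[
4(x+y+z)^2 \;>\; (x+y+z)\,xyz.
\]
Dividing through by the positive integer $x+y+z$ yields the desired $(*)$.

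The whole argument is a one-line substitution, so there is no real obstacle; the only thing to check is that $x,y,z$ are strictly positive (so that $x+y+z>0$ and division is legal), which is just the strict triangle inequality $a<b+c$, etc., applied to a genuine triangle.
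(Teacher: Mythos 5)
Your proof is correct and is exactly the computation the paper has in mind (the paper simply calls it "a simple consequence" of $p>A$ without writing out the substitution $s=x+y+z$, $p=2s$, $A^2=sxyz$). Nothing further is needed.
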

\begin{proof}
This is a simple consequence of the perimeter
of $H$ being larger than the area of $H$. 
\end{proof}
These two lemmas are our main tool for cutting
down the possible values of $x,y,$ and $z$.
In fact, they suffice to show that there are only
finitely many.

\begin{lemma}
Let $H$ be as above. Then there are only a finite
number of $x, y, z$ values that satisfy lemmas
1 and 2.
\end{lemma}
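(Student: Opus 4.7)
My plan is to use Lemma 2 to restrict $(x,y)$ to finitely many pairs, then use Lemma 1 to bound $z$ for each such pair.

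For the first step, $x \le y \le z$ gives $x+y+z \le 3z$, so $(*)$ becomes $12z \ge 4(x+y+z) > xyz$, hence $xy \le 11$. Thus $(x,y)$ ranges over a finite set of pairs of positive integers with $x \le y$.

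The main obstacle is the second step: bounding $z$ once $(x,y)$ is fixed. One might hope that $(*)$ alone suffices, but when $xy \le 4$ the inequality is automatically satisfied and gives no bound on $z$; the five pairs $(x,y) \in \{(1,1),(1,2),(1,3),(1,4),(2,2)\}$ therefore need a different tool. Here I would invoke Lemma 1. Since $p = 2s$, we have $2p^2 = 8s^2$, so $A \mid 8s^2$; writing $8s^2 = AK$ for a positive integer $K$ and squaring gives $64 s^4 = A^2 K^2 = sxyz \cdot K^2$, whence
\[
xyz \;\big|\; 64\, s^3 \;=\; 64(x+y+z)^3.
\]
Setting $m := x+y$ and using $z \mid xyz$, this becomes $z \mid 64(z+m)^3$. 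Let $g := \gcd(z, z+m)$, which divides $m$; writing $z = ga$ and $z+m = gb$ with $\gcd(a,b) = 1$, the divisibility reduces to $a \mid 64 g^2 b^3$, and coprimality forces $a \mid 64 g^2$. Hence $z = ga \le 64 g^3 \le 64 m^3 = 64(x+y)^3$, a finite bound depending only on $(x,y)$. Together with the finiteness of admissible pairs $(x,y)$, this yields only finitely many triples $(x,y,z)$, completing the proof.
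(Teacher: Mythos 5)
Your proof is correct and follows essentially the same strategy as the paper: inequality $(*)$ with $x\le y\le z$ bounds the small variables, and the divisibility $A\mid 2p^2$ yields $z\mid 64(x+y+z)^3$, which pins down $z$ in terms of $x+y$. The only cosmetic differences are that you bound $xy\le 11$ where the paper bounds $x\le 3$ and $y\le 9$ separately, and you extract the bound on $z$ from $z\mid 64(z+m)^3$ via a gcd argument where the paper expands the cube to get $z\mid 64(x+y)^3$ directly.
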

\begin{proof}
We first show that $x\leq 3$. If $x\geq 4$, then by
Lemma 2, 
\[
4(z+z+z)\geq 4(x+y+z)>xyz\geq 4\cdot 4\cdot z = 16z,
\]
a contradiction, so $x\leq 3$.

Now we show that $y\leq 9$. If $y\geq 10$, then
\[
4(3+z+z)\geq 4(x+y+z)>xyz\geq 10z \implies 12 + 8z > 10z
\implies 6>z,
\]
which is a contradiction since $z\geq y\geq 9$. We
conclude that there are only finitely many values of $x$
and $y$.

We now tackle $z$. Lemma 1 states that $2p^2/A$ is
an integer, which means
\[
\frac{8s^2}{\sqrt{sxyz}}\in \N
\implies \frac{64s^4}{sxyz}\in \N
\implies \frac{64(x+y+z)^3}{xyz}\in\N
\implies \frac{64(x+y+z)^3}{z}\in \N.
\]
Let $c=x+y$. Then $64(z+c)^3/z=
64(z^2+3zc+3c^2+c^3/z)$ is an integer,
which implies that $64c^3/z$ is an integer.
So $z$ must be a divisor of $64(x+y)^3$, and
since there are only finitely many values of 
$64(x+y)^3$, there are only finitely many values
of $z$ also.
\end{proof}

We now need to investigate only a finite number
of cases. It turns out that the possibilities  can
be cut down considerably by using the requirement
that the area of $H$ is an integer. We provide
an example here; other cases are similar.

Suppose $x=1$ and $y=4$. The $2p^2/A$
calculation in Lemma 3 shows that $4z$ divides
$64(z+5)^3$;
we conclude that $z$ is a divisor of $2^4\cdot 5^3$.
Since $z\geq 4$, there are $18$ possibilities
for $z$. The area of $H$ is $\sqrt{4z(z+5)}$;
among the 18 possible value of $z$, only
$z=4$ produces an integer area. Thus this
case produces only one possibility: $x=1$,
$y=4$, and $z=4$.

Indeed, when we check all possible values of
$x,y,z$, we come up with just four cases that
satisfy all the requirements mentioned above:
\begin{itemize}
\item
$x=1, y=4, z=4$, producing $H$ with side lengths $5,5,8$.
\item
$x=1, y=2, z=3$, producing $H$ with side lengths $3,4,5$.
\item
$x=1, y=2, z=24$, producing $H$ with side lengths $3,25,26$.
\item
$x=1, y=2, z=864$, producing $H$ with side lengths $3,865,866$.
\end{itemize}

The first two cases are easy to eliminate. The first case produces a triangle of area $12$ and perimeter $18$. Its amicable partner, if it exists, must have semiperimeter $6$. This yields three possibilities: $x=1, y=2, z=3$ or $x=1, y=1, z=4$ or $x=2, y=2, z=2$, none of which yields an area of $18$. The second case produces a triangle of area $6$, but it is impossible to have a partner triangle of perimeter $6$, the only possibility being an equilateral triangle with side-length $2$ that has an irrational area.

If $H$ is the fourth triangle listed above then its perimeter is 1734 and its area is 1224.
Thus its partner triangle has perimeter 1224 and
area 1734. Therefore for the partner triangle,
we have $xyz=1734^2/612=4913$, an odd number,
which implies that $x,y,$ and $z$ are all odd. 
This contradicts that the semiperimeter of the
partner triangle is 612. 
Therefore $H$ has no partner triangle. 

The third case produces the amicable pair mentioned
in the Theorem. 
This concludes the proof of the Theorem: there
is a unique pair of amicable Heron triangles.

\end{document}